\newtheorem{theorem}{Theorem}[section]
\newtheorem{lemma}[theorem]{Lemma}
\newtheorem{prop}[theorem]{Proposition}
\newtheorem{definitiontemp}[theorem]{Definition}
\newenvironment{definition}{\begin{definitiontemp}
\normalfont}{\end{definitiontemp}}
\theoremstyle{remark}
\newtheorem*{remark}{Remark}
\newcommand{\cp}{\mathbb{C}_p}
\newcommand{\qp}{\mathbb{Q}_p}
\newcommand{\A}{\mathbb{A}}
\newcommand{\dwcoh}{H^\bullet_{DW}}
\newcommand{\R}{\mathbb{R}}
\newcommand{\C}{\mathbb{C}}
\newcommand{\proj}{\mathbb{P}}
\title{Frobenius map on local Calabi-Yau manifolds}
\author{I. Shapiro}
\begin{document}

\begin{abstract}
We prove results that, for a certain class of non-compact Calabi-Yau threefolds,  relate the Frobenius action on their $p$-adic cohomology to the Frobenius action  on the $p$-adic cohomology of the corresponding curves.  In the appendix, we describe our interpretation of the Griffiths-Dwork method.
\end{abstract}

\maketitle

\section{Introduction}
In the present paper we will consider (families of) non-compact manifolds $\widetilde{X}$ specified by the equation \begin{equation}\label{localeq}xy+P(u,v)=0\end{equation} where $x, y, u, v$ are in $\C$ or $x,y\in\C$, and $u,v\in \C^\times$, and $P$ may depend on a parameter $\lambda$.  (In the first case $P(u,v)$ is a polynomial and in the second case it is a polynomial in $u^{\pm 1}, v^{\pm 1}$.)  In fact we consider more general situations, but it is the Calabi-Yau manifolds of this kind that appear in many physically interesting situations.  Namely, the first case is realized in matrix models and the manifolds of the second kind appear as mirror partners of non-compact toric Calabi-Yau threefolds (local threefolds).

It is well known that many questions related to these manifolds can be reduced to the study of the Riemann surface $P(u,v)=0$ that we will denote by $X$.  It was shown in \cite{ksv, sv2} that the analysis of instantons, mirror map, and the number of holomorphic disks can be reduced to the study of the Frobenius map on the cohomology of the Calabi-Yau threefold considered as a variety over the $p$-adics; more precisely, we change our coefficients from complex numbers $\C$ to the $p$-adic complex numbers $\cp$.

The main goal of the present paper is to relate the Frobenius map on the $p$-adic cohomology of the Calabi-Yau threefold specified by the equation (\ref{localeq}) to the Frobenius map on the $p$-adic cohomology of the Riemann surface $P=0$.  To begin, one must relate the cohomology groups themselves. The existence of this relationship is known and widely used in the physics literature (though not in the $p$-adic setting), however we do not know of a reference containing the relevant proofs.  Thus we begin by proving some results that clarify the precise relation in the cases that we consider.  Roughly speaking,  the cohomology of $X$ is closely related to the shifted (by two) cohomology of $\tilde{X}$, in particular, the simplest case that we examine is $x,y,u,v\in\cp$ and  in this instance, for $i\geq 1$, we have that $H^i(X)\cong
H^{i+2}(\tilde{X})$. The main purpose of the paper is to establish the following:  \begin{equation}\label{main}p\cdot Fr_{X}=Fr_{\tilde{X}}.\end{equation}

We will demonstrate this relation  using the Dwork's definition of the Frobenius map (on Dwork cohomology) or more precisely a version of this definition that we outline below (see \cite{homoint} for more details).  Finally, in the appendix we provide a brief discussion of the modifications of this method necessary to handle the projective case that arises even in the present, seemingly affine, situation.  The discussion is our interpretation of the Griffiths-Dwork method (\cite{coxkatz} and references therein) that we use in \cite{shapiro}.

\section{Preliminaries}
Let $\cp$ denote the completion of the algebraic closure of the
$p$-adic numbers $\qp$.  We assume that $p$ is an odd prime and
$\pi\in\cp$ is such that $\pi^{p-1}=-p.$

Denote by $\cp^\dagger\langle x_i\rangle$ the subring of the formal
power series ring $\cp[[x_i]]$ consisting of the overconvergent series.
More precisely, $\cp^\dagger\langle x_i\rangle$ consists of elements
$\sum a_I x^I$ with $\text{ord}_p a_I\geq c|I|+d$ and $c>0$, i.e., those
power series that converge on a neighborhood of the closed polydisk
of radius $1$ around $0\in\cp^n$.

Note that one can take quotients of $\cp^\dagger\langle x_i\rangle$.
Thus the expression $\cp^\dagger\langle x,x^{-1}\rangle$ is to be
understood as $\cp^\dagger\langle x,y\rangle /\{xy=1\}$ and this
ring consists of elements of the form $\sum_{-\infty}^\infty a_i x^i$
with $\text{ord}_p a_i\geq c|i|+d$ and $c>0$.

We use the language of $D$-modules, however in our case it is sufficient to restrict the attention to $D$-modules on affine spaces.  More precisely, for us a $D$-module $M$ is a module over the Weyl algebra $\mathcal{W}_{z_1,..,z_n}$, i.e., an algebra generated by $z_i$ and $\partial_{z_i}$ subject to the relations $\partial_{z_i} z_j- z_j\partial_{z_i}=\delta_{ij}$.  A pushforward of $M$ onto the first $s$ coordinates $z_1,..,z_s$ is the complex of $\mathcal{W}_{z_1,..,z_s}$-modules given by $M[dz_{s+1},..,dz_n]$ (where $dz_i$ are odd variables of degree $1$ and elements of $M$ are assigned degree $0$) with the differential $\sum_{i=s+1}^n \partial_{z_i}\cdot dz_i$.  This is also known as the relative de Rham complex of the $D$-module $M$.  If $s=0$ then the resulting complex of $D$-modules over a point, i.e., vector spaces, is the de Rham complex of $M$, whose cohomology is the de Rham cohomology of the $D$-module $M$.

The most basic $D$-module that we consider is $\cp^\dagger\langle z_i\rangle$ with the obvious $\mathcal{W}$ action.  A more interesting and key example for us is $\cp^\dagger\langle z_i\rangle e^f$ which is the same as $\cp^\dagger\langle z_i\rangle$ as a vector space, but the action of $\partial_{z_i}$ is modified to act by $\partial_{z_i}+(\partial_{z_i}f)$. Note that one also has a $D$-module $\cp[z_i]e^f$ defined similarly.  When there is no chance of confusion we simply write $e^f$ to denote a $D$-module of this type.  Conversely, we sometimes write $g(z_i)$ to denote the element $g(z_i)e^f$.

A useful formula (see \cite{katz} for example) we will need later is
\begin{equation}\label{theformula}
\sum A_n
\dfrac{(-1)^n}{\pi^n}(pb+n-1)!=\dfrac{\pi^{(p-1)b}(b-1)!}{p}=(-1)^b
p^{b-1}(b-1)!\end{equation} where $\sum A_n \theta^n:=\text{exp}(\pi(\theta^p-\theta))$.
The Dwork's construction of the Frobenius action is based on the remark that $e^{\pi(\theta^p-\theta)}$ is in
$\cp^\dagger\langle \theta\rangle$ (see \cite{katz}), but $e^\theta$
itself is not.

If $C^\bullet$ is a cohomological complex, we denote by $C^\bullet[n]$ the same complex with the grading shifted by $n$.  More precisely, if $D^\bullet=C^\bullet[n]$, then $D^i=C^{i+n}$.

For a collection of operators $D_i:A_i\rightarrow B$, we write $B/D_i$ to denote the quotient of  $B$ by span of the images of
the operators $D_i$, i.e., $B/D_i=\frac{B}{\sum_i D_i(A_i)}$.

\section{De Rham cohomology and the Frobenius map}
In this section we
explain how to define the Dwork cohomology of a hypersurface in an \emph{affine space}\footnote{We deal with the \emph{projective space} in the appendix.} and the Frobenius action on it.  This gives a construction of the Frobenius map on the cohomology of this hypersurface.  The comparison between the approach used here and the one that is now considered usual is carried out in \cite{katz}.  If one neglects the (admittedly important) technicality of overconvergence, then it is not hard to see the relationship between the two (see \cite{homoint} for a direct proof or the appendix of this paper for a more conceptual sketch based on \cite{ddr}).   We remark that the cohomological degree in Dwork cohomology should be shifted down by
two for comparison with the usual (for example de Rham) cohomology.  Also the Frobenius action
defined here \emph{acquires an extra factor of $p$} as compared with
the usual Frobenius map.  To avoid confusion, we state all of our results in the standard convention.

Let $P$ be an equation defining a smooth
hypersurface\footnote{There exist versions of the following for higher
codimension, however we will not discuss them.} $X\subset\A^n$ (loosely speaking $\A^n=\A^n_{\cp}=\cp^n$).  Recall that the $D$-module $\cp^\dagger\langle
z_i\rangle e^{f}$ is defined by letting $\partial_{z_i}$ act via
$\partial_{z_i}+(\partial_{z_i}f)$.
\begin{definition}
The Dwork cohomology of
$X$, $H^\bullet_{DW}(X)$ is defined as the de Rham
cohomology of the
$D$-module $\cp^\dagger\langle z_i,t\rangle e^{\pi Pt}$ on
$\A^n\times\A^1$.
\end{definition}

\begin{remark}Recall that the de Rham cohomology, being the pushforward of
the $D$-module along the projection to a point, can be computed in
stages, via successive pushforwards; this is a direct analogue of
the Fubini theorem.  We take advantage of this fact repeatedly in the rest of the paper.
\end{remark}

The Frobenius action is defined on the $D$-module $\cp^\dagger\langle z_i,t\rangle
e^{\pi Pt}$ as follows:
$$Fr(f(z_i,t)e^{\pi P(z_i)t})=(e^{\pi(P(z_i^p)t^p-P(z_i)t)}f(z_i^p,t^p))e^{\pi
P(z_i)t}$$ where  $e^{\pi(P(z_i^p)t^p-P(z_i)t)}\in
\cp^\dagger\langle z_i,t\rangle$ since
$e^{\pi(\theta^p-\theta)}\in\cp^\dagger\langle \theta\rangle$.  This
is where one needs overconvergent series, they allow enough freedom
to define the natural action of Frobenius as above, yet enlarging
the polynomials by them does not change the cohomology.\footnote{The
issue is that while the de Rham cohomology of $\C[x]$ and $\C[[x]]$
is $\C$, this is no longer true for $\{\sum a_n x^n|a_n\rightarrow
0\}$ because $d(\sum x^{p^n})=\sum p^n x^{p^n-1}$.  This does not
happen if we allow only overconvergent series.  We can not use $\C[x]$ because it does not contain $e^{\pi(x^p-x)}$ and we can not use $\C[[x]]$ because it does contain $e^{\pi x}$.} In this way
$\dwcoh(X)$ acquires an action of $Fr_X$ by naturality.  More precisely, if $\omega$ is an overconvergent differential form, i.e., $\omega(z_i,t)=g(z_i,t)dz_{i_1}..dz_{i_s}dt$ with $g(z_i,t)\in\cp^\dagger\langle z_i,t\rangle$, then $$Fr(\omega)=e^{\pi(P(z_i^p)t^p-P(z_i)t)}g(z^p_i,t^p)dz^p_{i_1}..dz^p_{i_s}dt^p.$$

Note that everything that was discussed in this section still holds if we replace $\A^n$ with $Y=\A^n\times(\A^\times)^m$.  So that the $p$-adic cohomology of a hypersurface in $Y$, specified by an equation $P=0$ with $P$ a polynomial in variables $z_1,..,z_n$ and $w_1^{\pm 1},..,w_m^{\pm 1}$, also acquires a Frobenius action.

\section{From $xy+P=0$ to $P=0$}
This section addresses, in a purely algebraic manner, the comparison between the cohomologies
of the smooth affine hypersurfaces $xy+P=0$ and  $P=0$.  For us $P$ will always be an algebraic function on $Y=\A^n\times(\A^\times)^m$, i.e., $P\in\cp[z_i,w_j^{\pm 1}]$.  One can allow for more general $Y$ as far as the cohomology comparison is concerned, but it is not clear how to define the Frobenius map on them.  Furthermore, we compare the Frobenius actions  which turn out to be compatible as well.

\subsection{Key Lemma}
The Lemma below serves as the backbone of the paper.

\begin{lemma}\label{exptxy}
As a complex of $D$-modules, the pushforward of $\cp^\dagger\langle
t,x,y\rangle e^{\pi txy}$ along the projection onto the first
coordinate is quasi-isomorphic to $$\frac{\cp^\dagger\langle z^{\pm
1}\rangle}{\cp^\dagger\langle z\rangle}[-1]\oplus\cp^\dagger\langle
z^{\pm 1}\rangle[-2].$$

Furthermore, the Frobenius action inherited by the degree $2$ part
of the pushforward is equal to $p$ times the usual Frobenius on
$\cp^\dagger\langle z^{\pm 1}\rangle$, i.e. $z\mapsto z^p$.
\end{lemma}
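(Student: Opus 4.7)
The plan is to realize the pushforward as an iterated pushforward, first along $y$ and then along $x$, identify each intermediate object, and then extract the Frobenius from Dwork's identity (\ref{theformula}). For the first pushforward the relative de Rham complex is $\cp^\dagger\langle t,x,y\rangle e^{\pi txy} \xrightarrow{\partial_y + \pi tx} \cp^\dagger\langle t,x,y\rangle e^{\pi txy}$. The kernel vanishes because any formal solution $g(t,x) e^{-\pi txy}$ fails overconvergence. For the cokernel $N_1$, iterating the relation $(\partial_y + \pi tx)(y^n) = n y^{n-1} + \pi tx y^n \equiv 0$ yields $[y^n] = C_n(tx)^{-n-1}[1]$ with $C_n = (-1)^n n!/\pi^n$, and I would construct an isomorphism
\[
N_1 \;\xrightarrow{\sim}\; \cp^\dagger\langle t^{\pm 1}, x^{\pm 1}\rangle/\cp^\dagger\langle t,x\rangle, \qquad \sum a_n(t,x) y^n \mapsto \sum a_n(t,x) C_n (tx)^{-n-1},
\]
where the target equals $\cp^\dagger\langle t, x, (tx)^{-1}\rangle$ since $t^{-1} = x(tx)^{-1}$ and $x^{-1} = t(tx)^{-1}$ both lie in the latter.

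For the second pushforward, the operator $\partial_x + \pi ty$ commutes with $\partial_y + \pi tx$ in the Weyl algebra, so it descends to $N_1$. Combining $(\partial_x + \pi ty)(y^n) = \pi t\, y^{n+1}$ with the recursion $\pi C_{n+1} = -(n+1) C_n$ shows that, under the identification above, this operator becomes plain $\partial_x$ on $Q := \cp^\dagger\langle t^{\pm 1}, x^{\pm 1}\rangle/\cp^\dagger\langle t,x\rangle$. Since $\partial_x[t^i x^j] = j[t^i x^{j-1}]$ and $[t^i x^j]$ is nonzero in $Q$ exactly when $\min(i,j) < 0$, the kernel is spanned by $\{[t^{-n}] : n \geq 1\}$ and the cokernel by $\{[t^i x^{-1}] : i \in \mathbb{Z}\}$, manifestly isomorphic to $\cp^\dagger\langle t^{\pm 1}\rangle/\cp^\dagger\langle t\rangle$ and $\cp^\dagger\langle t^{\pm 1}\rangle$ respectively. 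These land in degrees $1$ and $2$ of the total complex, yielding the claimed quasi-isomorphism with $z = t$.

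For the Frobenius, I would represent the basis vector $t^i$ of the degree-$2$ part by $f_i\, e^{\pi txy} dx\, dy$ with $f_i = t^{i+1}$ for $i \geq -1$ (a modification using positive powers of $y$ handles $i \leq -2$). Since $Fr$ sends $dx\, dy$ to $d(x^p) d(y^p) = p^2 x^{p-1} y^{p-1} dx\, dy$, one computes
\[
Fr(f_i\, dx\, dy) = p^2\, t^{p(i+1)} x^{p-1} y^{p-1}\, e^{\pi(t^p x^p y^p - txy)}\, dx\, dy.
\]
Expanding $e^{\pi(\theta^p - \theta)} = \sum_n A_n \theta^n$ at $\theta = txy$ and reducing each monomial via $[y^m] = C_m(tx)^{-m-1}$ collapses the class to a multiple of $t^{pi} x^{-1}$ with total coefficient $p^2 \sum_n A_n C_{n+p-1}$. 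Dwork's identity (\ref{theformula}) at $b = 1$ evaluates this sum to $1/p$, producing $p \cdot t^{pi} x^{-1}$, matching $p$ times the image of $t^i$ under $t \mapsto t^p$; the identity at higher $b$ handles $i \leq -2$ analogously.

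The main obstacle is the overconvergence bookkeeping in the identification $N_1 \cong \cp^\dagger\langle t^{\pm 1}, x^{\pm 1}\rangle/\cp^\dagger\langle t, x\rangle$: one must check that the formal map above is surjective onto the overconvergent Laurent quotient (possibly after a controlled shrinkage of the radius) and that its kernel is exactly the image of $\partial_y + \pi tx$. Once this overconvergence is settled, the second pushforward and cohomology computations are purely algebraic and the Frobenius assertion reduces directly to Dwork's identity.
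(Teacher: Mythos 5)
Your proposal is correct and follows essentially the same route as the paper: an iterated pushforward (the paper integrates out $x$ first, you integrate out $y$ first), the same identification of the cokernel with an overconvergent Laurent quotient on which the remaining operator acts as a plain partial derivative, and the same Frobenius computation via the representatives $t^{i+1}dx\,dy$ (resp.\ positive powers of $xy$ for $i\le -2$) reduced by Dwork's identity (\ref{theformula}). The only differences are cosmetic — you compute kernel and cokernel directly where the paper exhibits a quasi-isomorphic subcomplex, and you leave the $i\le -2$ Frobenius case as analogous where the paper writes it out with the representatives $(-\pi xy)^i/i!$.
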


\begin{proof}
To check the first claim we must examine the fiberwise de Rham
complex of $\cp^\dagger\langle t,x,y\rangle$, namely the complex
$\cp^\dagger\langle t,x,y\rangle[dx,dy]$ (where $dx$ and $dy$ are
considered as odd variables) with the differential $(\partial_x+\pi
ty)\cdot dx+(\partial_y+\pi tx)\cdot dy$.  Because we care about the
$D$-module structure, keeping track of  $t$ and $\partial_t+\pi xy$
action is essential.

Beginning with $\partial_x+\pi ty$ we see that it has no kernel, and
the cokernel is isomorphic to $\frac{\cp^\dagger\langle t^{\pm
1},y^{\pm 1}\rangle}{\cp^\dagger\langle t,y\rangle}$ with
$\partial_y+\pi tx$ and $\partial_t+\pi xy$ acting via $\partial_y$
and $\partial_t$ respectively.  Considering the
$\partial_y$-equivariant short exact sequence $$0\rightarrow\cp^\dagger\langle
t,y\rangle\rightarrow\cp^\dagger\langle t^{\pm 1},y^{\pm
1}\rangle\rightarrow\frac{\cp^\dagger\langle t^{\pm 1},y^{\pm
1}\rangle}{\cp^\dagger\langle t,y\rangle}\rightarrow 0$$ it is clear that the
inclusion of the subcomplex $$\frac{\cp^\dagger\langle t^{\pm
1}\rangle}{\cp^\dagger\langle
t\rangle}1\stackrel{0}{\rightarrow}\cp^\dagger\langle t^{\pm
1}\rangle\frac{1}{y}$$ into the complex
$$\frac{\cp^\dagger\langle t^{\pm 1},y^{\pm
1}\rangle}{\cp^\dagger\langle
t,y\rangle}\stackrel{\partial_y}{\longrightarrow}\frac{\cp^\dagger\langle
t^{\pm 1},y^{\pm 1}\rangle}{\cp^\dagger\langle t,y\rangle}$$ is a
quasi-isomorphism, verifying the first claim.

An explicit map giving the isomorphism of $D$-modules between
$\cp^\dagger\langle z^{\pm 1}\rangle$ and the degree $2$ part of the
pushforward is given by
\begin{align*}
\cp^\dagger\langle z^{\pm
1}\rangle&\stackrel{\varphi}{\longrightarrow}\frac{\cp^\dagger\langle
t,x,y\rangle}{\genfrac{}{}{0pt}{}{\partial_x+\pi ty}{\partial_y+\pi tx}}dx dy\\
z^\alpha&\mapsto t^{\alpha+1}dx dy\\
z^{-i-1}&\mapsto \frac{(-\pi xy)^i}{i!}dx dy
\end{align*} where $\alpha, i\geq 0$.

All that remains is to check the $Fr$ compatibility, we do this in
two parts.  Part one:
\begin{align*}
Fr(\varphi(z^\alpha))&=Fr(t^{\alpha+1}dx dy)\\
&=\sum A_n(txy)^n t^{(\alpha+1)p}dx^p dy^p\\
&=p^2\sum A_n (txy)^{n+p-1} t^{\alpha p+1} dx dy\\
&=p^2\sum A_n (-\partial_y/\pi)^{n+p-1} y^{n+p-1}t^{\alpha p+1} dx dy\\
&=p^2\sum A_n(-1/\pi)^{n+p-1}(n+p-1)! t^{\alpha p+1}dx dy\\
&=-p\sum A_n (-1/\pi)^{n}(n+p-1)! t^{\alpha p+1}dx dy\\
&=p t^{\alpha p+1}dx dy\\
&=\varphi(pz^{\alpha p})\\
&=\varphi(pFr(z^\alpha)).
\end{align*} And part two:
\begin{align*}
Fr(\varphi(z^{-i-1}))&=Fr(\frac{(-1)^i\pi^i(xy)^i}{i!}dx dy)\\
&=\sum A_n(txy)^n \frac{(-1)^i\pi^i(xy)^{ip}}{i!}dx^p dy^p\\
&=p^2\sum A_n(txy)^n \frac{(-1)^i\pi^i(xy)^{ip+p-1}}{i!} dx dy\\
&=(p^2(-\pi)^i/i!)\sum A_n (-\partial_y/\pi)^{n} y^n (xy)^{pi+p-1} dx dy\\
&=(p^2(-\pi)^i/i!)\sum A_n(-1/\pi)^{n}\frac{(n+pi+p-1)!}{(pi+p-1)!} (xy)^{pi+p-1}dx dy\\
&=\frac{p^2(-\pi)^i}{i!(pi+p-1)!}(-1)^{i+1}p^i i! (xy)^{pi+p-1} dx dy\\
&=p (-1)^{(i+1)p-1}\frac{\pi^{(i+1)p-1}}{((i+1)p-1)!}(xy)^{(i+1)p-1}dx dy\\
&=\varphi(pz^{-(i+1)p})\\
&=\varphi(pFr(z^{-i-1})).
\end{align*}  Note that the formula (\ref{theformula}) is crucial in
the above computations.
\end{proof}

Recall that $P=P(z_i,w_j^{\pm 1})$ is a  polynomial  defining a smooth
hypersurface $X$ in $\A^n\times(\A^\times)^m$.  Denote by $e^{\pi Pt}\otimes\delta_{t=0}$
the cokernel of the map $$\cp^\dagger\langle z_i,w_j^{\pm 1},t\rangle e^{\pi
Pt}\rightarrow \cp^\dagger\langle z_i,w_j^{\pm 1},t^{\pm 1}\rangle e^{\pi Pt}.$$

\begin{lemma}\label{delta}
The $D$-module pushforward of $e^{\pi Pt}\otimes\delta_{t=0}$ onto
$\{z_i,w_j^{\pm 1}\}$ is isomorphic to $\cp^\dagger\langle z_i,w_j^{\pm 1} \rangle[-1]$.

\end{lemma}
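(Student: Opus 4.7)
The plan is to unwind the pushforward as a two-term de Rham complex in the $t$-direction and compute its cohomology explicitly. By the definition of pushforward, the complex to analyze is
$$M\xrightarrow{\partial_t+\pi P}M, \qquad M=e^{\pi Pt}\otimes\delta_{t=0},$$
placed in degrees $0$ and $1$, with $M$ a $D$-module over $\{z_i,w_j^{\pm 1}\}$. Every element of $M$ has a canonical representative $\sum_{n\geq 1}f_n t^{-n}$ with $f_n\in R:=\cp^\dagger\langle z_i,w_j^{\pm 1}\rangle$ subject to the overconvergence condition inherited from $\cp^\dagger\langle z_i,w_j^{\pm 1},t^{\pm 1}\rangle$. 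I will show that $H^0$ of the pushforward vanishes and construct an explicit isomorphism $R\cong H^1$ as $D$-modules.

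For $H^0$, applying $\partial_t+\pi P$ to $\sum f_n t^{-n}$ yields $\pi P f_1 t^{-1}+\sum_{n\geq 2}(\pi P f_n-(n-1)f_{n-1})t^{-n}$, so matching coefficients of $t^{-n}$ to zero inductively forces $f_n=0$ for every $n$. To identify $H^1$, I will use the map
$$\varphi:R\to M,\qquad g\mapsto gt^{-1}.$$
The modified derivative $\partial_{z_i}+\pi t(\partial_{z_i}P)$ acts on $gt^{-1}$ to give $(\partial_{z_i}g)t^{-1}+\pi(\partial_{z_i}P)g$; the second summand lies in $\cp^\dagger\langle z_i,w_j^{\pm 1},t\rangle$ and thus vanishes in $M$, so $\varphi$ is a $D$-module homomorphism (similarly for $\partial_{w_j}$ and multiplication by coordinates).

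Post-composing with the projection $M\twoheadrightarrow H^1$, surjectivity follows from $(\partial_t+\pi P)(gt^{-n})=-ngt^{-n-1}+\pi Pgt^{-n}$: iterating this relation gives $gt^{-m}\equiv\tfrac{(\pi P)^{m-1}g}{(m-1)!}t^{-1}$ modulo the image, so any class $[\sum g_m t^{-m}]$ equals $\varphi\!\left(\sum_m\tfrac{(\pi P)^{m-1}g_m}{(m-1)!}\right)$. For injectivity, if $gt^{-1}=(\partial_t+\pi P)(\sum h_n t^{-n})$, then matching $t^{-n}$-coefficients recursively forces $h_n=\tfrac{(n-1)!\,g}{(\pi P)^n}$; the joint overconvergence $\text{ord}_p(h_{n,I})\geq c(n+|I|)+d$ then forces $\text{ord}_p(g)\geq cn+O(1)$ for every $n$, hence $g=0$.

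The main technical obstacle is the overconvergence bookkeeping: one must verify that $\sum_m\tfrac{(\pi P)^{m-1}g_m}{(m-1)!}$ actually lies in $R$ with a positive overconvergence rate rather than being merely a formal power series, and that the valuation estimate in the injectivity step genuinely forces $\text{ord}_p(g)$ to infinity. Both rest on the standard Dwork estimate $\text{ord}_p(\pi^n/n!)=s_p(n)/(p-1)\geq 0$ together with the strict positivity of the overconvergence rate $c>0$, which outpaces the polynomial-in-$m$ growth from $P^{m-1}$; this parallels the mechanism already used implicitly in Lemma \ref{exptxy}.
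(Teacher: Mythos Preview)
Your proof is correct and follows essentially the same route as the paper's: both analyze the two-term complex $M\xrightarrow{\partial_t+\pi P}M$, establish the vanishing of $H^0$ by induction on the $t^{-n}$-coefficients using that $P$ is not a zero divisor, and identify $H^1$ via the $D$-module map $g\mapsto g/t$. The only difference is in the injectivity step, where the paper simply notes that $1/t$ is nonzero in the cokernel and invokes (implicitly) the simplicity of $\cp^\dagger\langle z_i,w_j^{\pm 1}\rangle$ as a $D$-module, whereas you carry out an explicit Gauss-norm estimate; your treatment also makes the overconvergence bookkeeping for surjectivity explicit, which the paper leaves as ``one checks''.
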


\begin{proof}
By definition, we need to show that $\partial_t+\pi P$ acting on
$e^{\pi Pt}\otimes\delta_{t=0}$ has no kernel, and its cokernel is
isomorphic to $\cp^\dagger\langle z_i,w_j^{\pm 1} \rangle$.

The first statement follows from a direct calculation, more precisely, if
$(\partial_t+\pi P)\sum f_\alpha(z_i,w_j^{\pm 1})t^\alpha$ belongs to
$\cp^\dagger\langle z_i,w_j^{\pm 1},t\rangle$ then $\sum
f_\alpha(z_i,w_j^{\pm 1})t^\alpha\in \cp^\dagger\langle z_i,w_j^{\pm 1},t\rangle$, using
that $P$ is not a zero divisor.

For the second statement, consider the map $$\cp^\dagger\langle z_i,w_j^{\pm 1}
\rangle\rightarrow \frac{e^{\pi Pt}\otimes\delta_{t=0}}{\partial_t+\pi
P}$$ mapping $f(z_i,w_j^{\pm 1})$ to $f/t$.  One checks that it is a map of
$D$-modules and is surjective.  Since $1/t$ can be seen to be a
non-zero element of $\frac{e^{\pi Pt}\otimes\delta_{t=0}}{\partial_t+\pi
P}$, the map is not identically $0$;  then it is an isomorphism. The
conclusion follows.
\end{proof}

\begin{remark}
Our notation $e^{\pi Pt}\otimes\delta_{t=0}$ for the $D$-module above quite correctly suggests that it is a twisted (by $e^{\pi Pt}$) version of the delta functions $D$-module $\delta_{t=0}=\cp^\dagger\langle t^{\pm 1}\rangle/\cp^\dagger\langle t\rangle$.  We follow the convention of neglecting to mention the variables in which the $D$-module is just the $D$-module of functions (overconvergent or not); thus $\delta_{t=0}$ should really be equal to $\cp^\dagger\langle z_i,w_j^{\pm 1},t^{\pm 1}\rangle /\cp^\dagger\langle z_i,w_j^{\pm 1},t\rangle$.  The content of the Lemma above is that the twisting in this case can be ignored, i.e., $$e^{\pi Pt}\otimes\delta_{t=0}\cong \delta_{t=0}.$$

\end{remark}

\subsection{An ``explanation" of what is happening}\label{explanation}
We provide here an intuitive explanation for the Lemmas above and their place in the proof of the reductions from $xy+P=0$ to $P=0$.

It is well known that de Rham cohomology is closely related to
integration\footnote{See \cite{homoint} for a discussion in the physical context.}.  For example, we can integrate top forms on a smooth,
compact, oriented manifold $M$ over all of $M$ itself; this gives a
map from $\Omega^{top}M$ to $\mathbb{R}$.  By Stokes' theorem this
map factors through the de Rham cohomology $H^{top}_{dR}(M)$ since
the integrals of exact forms vanish.  If the manifold is connected,
then $H^{top}_{dR}(M)$ is one-dimensional and so the quotient map
$\Omega^{top}M\rightarrow H^{top}_{dR}(M)$ can be viewed as
integration (not normalized).

This analogy can be pursued further.  For $f\in\R[z_1,...,z_n]$
consider the $D$-module $e^f$ as above. Under suitable assumptions, it is
natural to interpret $\int_{\R^n}g(z_i)e^{f}$ as the image of
$g(z_i)dz_1...dz_n$ in $H^n_{dR}(e^f)\simeq
R[z_i]/\{\partial_{z_i}+(\partial_{z_i}f)\}$.  It is clear that in
general $H^\bullet_{dR}(e^f)$ need not be concentrated in degree
$n$, neither does it have to be one-dimensional.  Nevertheless, an
intuitive integral calculus of $D$-modules (for lack of a better
expression) can be developed that parallels closely the methods
involved in the theory of $D$-modules itself.  This calculus can be
used as an approximation and motivation to rigorous arguments which
can be extracted from it.

We remark that by interpreting integrals as appropriate de Rham
cohomologies we obtain a notion of integration in a purely algebraic
manner that transports easily to settings other than that of real or
complex numbers.  The presence of lower cohomology groups, i.e.
below the top one, in a sense represents singularities.  

\begin{definition}
For $M$ a $D$-module (or a complex of $D$-modules) on $X\times Y$ we write $$\int_X M$$ to denote the complex of $D$-modules on $Y$ obtained as the relative de Rham complex of  $M$ along the projection map $X\times Y\rightarrow Y$.  If $X=\A^n$ with coordinates $z_i$ we write $\int_{z_i} M$ instead, similarly if $X=(\A^\times)^m$ with coordinates $w_j$ we write $\int_{w^{\pm 1}_j} M$.
\end{definition}

\begin{remark}
The notation above is meant to be suggestive, certainly many things that it suggests are in fact true (see \cite{homoint}).
\end{remark}

Recall that for $P=P(z_i)$ by $\delta_{P=0}$ we mean the $D$-module on $\A^n$ given as the quotient $\mathcal{O}_{\A^n}[P^{-1}]/\mathcal{O}_{\A^n}$ where $\mathcal{O}_{\A^n}$ denotes functions in the variables $z_i$. Thus, for
example,  $\int_z \delta_{z=0}=\C[-1]$, which is what we would expect after integrating the delta function (recall that it is the \emph{top} degree that corresponds to the usual integral). Similarly, we have that $\int_z \mathcal{O}_{\A^1}=\C$; this indicates that the integral of the constant function $1$ over the line produces a singularity. In order to make the notation more suggestive let us be allowed to denote the $D$-module of functions (overconvergent or not) by the symbol $1$. Familiar formulas such
as $$\int_t e^{tP(z_i)}\delta_{t=0}=1[-1]$$ and $$\int_t e^{tP}=\delta_{P=0}[-1]$$
remain valid.

We now perform the calculation that ``demonstrates" the portion of the
Sec. \ref{general} that deals with the comparison of the
cohomology groups. Admittedly, it sheds no light on the
compatibility of the Frobenius action.  In the new language we must
compute  \begin{equation}\label{bigintegral}\int_{z_i,w^{\pm 1}_j,t,x,y}e^{tP+txy}\end{equation} with $P=P(z_1,...,z_n, w^{\pm 1}_1,...,w^{\pm 1}_m)$ and compare it to $\int_{z_i,w^{\pm 1}_j,t}e^{tP}$.

Observe that the integral in (\ref{bigintegral}) can be decomposed as $\int_{z_i,w^{\pm 1}_j,t}e^{tP}\int_{x,y}e^{txy}$.  Now the ``proof" of the Lemma \ref{exptxy} (whose relevance is now clear) proceeds as follows. Recall that $$\int_{x,y}e^{txy}=\int_{y}\delta_{ty=0}[-1].$$ It is not hard to see that $\delta_{ty=0}$ fits into an exact sequence $$0\rightarrow\delta_{t=0}\oplus\delta_{y=0}\rightarrow\delta_{ty=0}\rightarrow\delta_{t=0}\delta_{y=0}\rightarrow 0$$ so that roughly speaking $\delta_{ty=0}=\delta_{t=0}+\delta_{y=0}+\delta_{t=0}\delta_{y=0}$ and $$\int_{y}(\delta_{t=0}+\delta_{y=0}+\delta_{t=0}\delta_{y=0})=\delta_{t=0}+1[-1]+\delta_{t=0}[-1].$$ And the ``proof" is complete.

Returning to the integral (\ref{bigintegral}) we see that it splits into \begin{equation}\label{splitintegral}\int_{z_i,w^{\pm 1}_j,t}\delta_{t=0}[-1] +\int_{z_i,w^{\pm 1}_j,t}e^{tP}[-2] +\int_{z_i,w^{\pm 1}_j,t}\delta_{t=0}[-2] \end{equation}

The middle term in (\ref{splitintegral}) is exactly what we need, and it remains to be seen if the other two terms can be controlled.  We point out that $\int_{z_i,w^{\pm 1}_j,t}\delta_{t=0}=\int_{z_i,w^{\pm 1}_j}1[-1]=\int_{w^{\pm 1}_j}1[-1]=(1+1[-1])^m[-1]$.  So that for $m$ small we can state some meaningful results.

\subsection{Another look at the Key Lemma}
We observe that one can replace $xy$ by $z^2-w^2$.  Thus just as above we are
interested in understanding $\int_x e^{tx^2}$ as a complex of
$D$-modules on the line with the coordinate $t$.  In fact it is easy
to see directly that the complex is quasi-isomorphic to a single
$D$-module sitting in degree $1$, namely $\cp[x,t]/(\partial_x+2xt)$
with $(t,\partial_t)$ acting via $(t,\partial_t+x^2)$.  This
$D$-module has a $\cp$ basis consisting of $...,x^3, x^2, x, 1, t,
t^2, t^3,...$.  Analysis of the eigenspaces of $t\partial_t$ gives
the rest.  Namely we see that $$t\partial_t:x^i\mapsto
-\frac{i+1}{2} x^i$$ and $$t\partial_t:t^i\mapsto
(i-\frac{1}{2})t^i.$$  Thus as a $D$-module $$\int_x
e^{tx^2}=\left(\delta_{t=0}\oplus e^{\frac{1}{2}\log(t)}\right)[-1],$$
where the first summand is spanned by $x^i$ for $i$ odd, and the
second is spanned by the remaining basis elements.  Note that we use
the shorthand $e^{\frac{1}{2}\log(t)}$ for the $D$-module structure
on $\cp[t^{\pm 1}]$ given by $\partial_t+\frac{1}{2}/t$.

This gives another  computation of $\int_{xy}e^{txy}$ as a complex of
$D$-modules\footnote{We point out that unlike Sec. \ref{explanation} the computation here is another proof of Lemma \ref{exptxy}.}.  Namely we have the following chain of isomorphisms:
\begin{align*}
\int_{xy}e^{txy}&=\int_{zw}e^{tz^2-tw^2}\\
&=\int_w e^{-tw^2}\int_z e^{tz^2}\\
&=\int_w e^{-tw^2}\left(\delta_{t=0}\oplus
e^{\frac{1}{2}\log(t)}\right)[-1]\\
&=\delta_{t=0}[-1]\oplus e^{\frac{1}{2}\log(t)}[-1]\otimes\int_w
e^{-tw^2}\\
&=\delta_{t=0}[-1]\oplus
e^{\frac{1}{2}\log(t)}[-1]\otimes\left(\delta_{t=0}\oplus
e^{\frac{1}{2}\log(t)}\right)[-1]\\
&=\delta_{t=0}[-1]\oplus\cp[t^{\pm 1}][-2]
\end{align*} since $e^{\frac{1}{2}\log(t)}\otimes\delta_{t=0}=0$ and $e^{\frac{1}{2}\log(t)}\otimes e^{\frac{1}{2}\log(t)}=\cp[t^{\pm
1}]$ with the usual $D$-module structure.

\subsection{The case of $Y=\A^n$}
Let $P$ be a polynomial in $z_i$, i.e., an algebraic function on $\A^n$.  In this case the relationship between the hypersurfaces specified by $P+xy=0$ and $P=0$ is very simple.

\begin{prop}\label{reduction} Let $P$ be as above.
 Consider the hypersurface $\tilde{X}$ in $\A^{n+2}$
defined by $P+xy$.  Then, $$H^i(X)\cong
H^{i+2}(\tilde{X})$$ for $i\geq 1$.

Furthermore, $$p\cdot Fr_{X}=Fr_{\tilde{X}}.$$

\end{prop}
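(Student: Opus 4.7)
The plan is to compute $\dwcoh^\bullet(\tilde X)$ as the de Rham cohomology of $\cp^\dagger\langle z_i,x,y,t\rangle e^{\pi t(P+xy)}$ by iterated pushforward, integrating out the $(x,y)$ variables first. Since $P$ does not involve $x,y$, the exponential factors as $e^{\pi tP}\cdot e^{\pi txy}$, inducing a tensor decomposition of the full $D$-module over $\cp^\dagger\langle t\rangle$ under which the Dwork Frobenius splits multiplicatively into the Frobenius on $\mathcal{M}_P:=\cp^\dagger\langle z_i,t\rangle e^{\pi tP}$ and the Frobenius on $\cp^\dagger\langle x,y,t\rangle e^{\pi txy}$. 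Applying Lemma \ref{exptxy} with $\mathcal{M}_P$ playing the role of a parameter, the pushforward along $(x,y)$ is quasi-isomorphic to $(e^{\pi tP}\otimes\delta_{t=0})[-1]\oplus\cp^\dagger\langle z_i,t^{\pm 1}\rangle e^{\pi tP}[-2]$, with the Frobenius on the degree-$2$ summand equal to $p$ times the natural extension of the Dwork Frobenius on $\mathcal{M}_P$ to $\cp^\dagger\langle z_i,t^{\pm 1}\rangle e^{\pi tP}$.

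Next I would push the resulting complex along $(z_i,t)$. For the first summand, Lemma \ref{delta} gives $\int_t(e^{\pi tP}\otimes\delta_{t=0})=\cp^\dagger\langle z_i\rangle[-1]$, and the overconvergent Poincar\'e lemma on $\A^n$ then collapses this to $\cp$ concentrated in Dwork degree $2$ (standard degree $0$), which does not contribute to $H^{i+2}(\tilde X)$ for $i\geq 1$. For the second summand, I would use the short exact sequence $0\to\cp^\dagger\langle z_i,t\rangle e^{\pi tP}\to\cp^\dagger\langle z_i,t^{\pm 1}\rangle e^{\pi tP}\to e^{\pi tP}\otimes\delta_{t=0}\to 0$; applying $\int_{z_i,t}$ yields a distinguished triangle whose outer terms are $\dwcoh^\bullet(X)$ and, by Lemma \ref{delta} plus Poincar\'e, $\cp$ concentrated in degree $1$. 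The associated long exact sequence then gives $H^k\bigl(\int_{z_i,t}\cp^\dagger\langle z_i,t^{\pm 1}\rangle e^{\pi tP}\bigr)\cong\dwcoh^k(X)$ for every $k\geq 3$, so after the $[-2]$ shift one obtains $\dwcoh^{k+2}(\tilde X)\cong\dwcoh^k(X)$ for $k\geq 3$, which in standard convention reads $H^{i+2}(\tilde X)\cong H^i(X)$ for $i\geq 1$.

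For the Frobenius statement, the extra factor of $p$ produced by Lemma \ref{exptxy} is preserved under the tensor product with $\mathcal{M}_P$, by the multiplicative splitting noted above, and also survives the long exact sequence identification by the naturality of Frobenius with respect to $D$-module pushforwards and morphisms. Thus in Dwork conventions the isomorphism $\dwcoh^{k+2}(\tilde X)\cong\dwcoh^k(X)$ intertwines $Fr_{\tilde X}$ with $p\cdot Fr_X$; because both Dwork Frobenii differ from the standard ones by the same factor of $p$, the relation $Fr_{\tilde X}=p\cdot Fr_X$ transfers verbatim to the standard convention. The main obstacle I expect will be the Frobenius bookkeeping across the long exact sequence: one must verify that the connecting maps are Frobenius-equivariant, that the $\cp[-1]$ term from Lemma \ref{delta} does not contaminate the identification in the relevant degrees, and that the factor of $p$ from Lemma \ref{exptxy} is the only multiplicative factor picked up throughout the computation.
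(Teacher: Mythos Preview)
Your proposal is correct and follows essentially the same route as the paper: integrate out $(x,y)$ first via Lemma~\ref{exptxy} (using the tensor splitting $e^{\pi tP}\otimes e^{\pi txy}$), then handle the resulting $\cp^\dagger\langle z_i,t^{\pm 1}\rangle e^{\pi tP}[-2]\oplus (e^{\pi tP}\otimes\delta_{t=0})[-1]$ via the short exact sequence with quotient $e^{\pi tP}\otimes\delta_{t=0}$, invoking Lemma~\ref{delta} and the Poincar\'e lemma to reduce the error terms to $\cp[-3]\oplus\cp[-2]$. The only cosmetic difference is that the paper bundles both $\delta_{t=0}$ contributions into a single quotient and a single long exact sequence, whereas you treat the $[-1]$ summand separately before running the exact sequence on the $[-2]$ summand; the Frobenius argument via the $p$-twisted intertwining from Lemma~\ref{exptxy} is identical.
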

\begin{proof}
Recall that $H^\bullet(\tilde{X})[-2]\cong H^\bullet_{DW}(\tilde{X})$ and the latter is, by definition, the de Rham
cohomology of the $D$-module $e^{\pi(Pt+txy)}$.  Since $\pi Pt$ does
not depend on $x$ and $y$, the pushforward of $e^{\pi(Pt+txy)}$ onto
the remaining coordinates is isomorphic  to
$\cp^\dagger\langle z_i,t^{\pm 1}\rangle e^{\pi Pt}[-2]\oplus e^{\pi
Pt}\otimes\delta_{t=0}[-1]$ by Lemma \ref{exptxy}.\footnote{This is a $D$-module incarnation
of the constant multiple rule of basic Calculus.}

The pushforward contains as a submodule $\cp^\dagger\langle
z_i,t\rangle e^{\pi Pt}[-2]$, whose de Rham cohomology computes
$H^\bullet_{DW}(X)[-2]$.  The quotient is isomorphic to $e^{\pi
Pt}\otimes\delta_{t=0}[-2]\oplus e^{\pi Pt}\otimes\delta_{t=0}[-1]$.  Thus,
using Lemma \ref{delta},
$$H^\bullet_{DW}(X)[-2]\rightarrow
H^\bullet_{DW}(\tilde{X})\rightarrow \cp[-3]\oplus\cp[-2]$$ forms a
long exact sequence; the first statement of the Proposition then
follows immediately.

For the second statement, observe that  the
inclusion of $\cp^\dagger\langle z_i,t\rangle e^{\pi Pt}[-2]$ into
$\cp^\dagger\langle z_i,t^{\pm 1}\rangle e^{\pi Pt}[-2]$ intertwines
$pFr$ and $Fr$ by Lemma \ref{exptxy}, where the latter Frobenius descends from
$e^{\pi(tP+txy)}$.

\end{proof}

\subsection{General case}\label{general}
Consider the case of our most general $Y=\A^n\times(\A^{\times})^m$, i.e., $P=P(z_i,w_j^{\pm 1})$.  Then by applying the methods of the previous section we obtain the following description of the cohomology of $\widetilde{X}$: \begin{equation}\label{generalform}H^\bullet(\widetilde{X})\cong H^\bullet(Y)\oplus\overline{H^\bullet(X)}[-2]\end{equation} where $\overline{H^\bullet(X)}$ fits into a long exact sequence \begin{equation}\label{extension}H^\bullet(X)\rightarrow\overline{H^\bullet(X)}\rightarrow H^\bullet(Y)[1].\end{equation}  Furthermore, the map $$\alpha:H^\bullet(X)[-2]\rightarrow H^\bullet(\widetilde{X})$$ that we obtain from the above considerations is compatible with the Frobenius map in the sense that $$Fr\circ\alpha=p\cdot\alpha\circ Fr.$$

One can say more though we will not need it.  Namely, the map $H^\bullet(\widetilde{X})\rightarrow H^\bullet(Y)$ obtained from (\ref{generalform}) commutes with the Frobenius map, and the map $\beta:H^\bullet(\widetilde{X})\rightarrow H^\bullet(Y)[-1]$ obtained from (\ref{extension}) satisfies $p\cdot Fr\circ\beta=\beta\circ Fr$.

\subsection{The case of $Y=(\A^\times)^2$}
As we have seen above, in the case of $Y=\A^n$ we have a complete description of the relationship between the two hypersurfaces, whereas in the general case we can only give a partial answer.  We will now concentrate on the case of a two dimensional $Y$ and furthermore assume that it is $(\A^\times)^2$.  The answer will not be as straightforward as in the simplest case where the cohomology of $Y$ itself is so trivial that it does not contribute to the relationship.  Here we will have some contribution of the cohomology of $Y$ that we will spell out below.  We then apply our results to the analysis of a particular example that is often examined in the physics literature.

The setting we are currently working in is as follows. The Calabi-Yau $CY\subset\A^2\times(\A^{\times})^2$ is given by the equation
$xy+P(u,v)=0$ where $u$ and $v$ are coordinates on the torus
$(\A^{\times})^2$.  Let $C=\{P(u,v)=0\}$ with
$i:C\hookrightarrow(\A^{\times})^2$ denoting the inclusion.  Applying the discussion of the Sec. \ref{general} (and some explicit computations and re-scaling) we
immediately obtain the exact sequence below (it is of course part of
a larger one, but this is the useful piece):
\begin{equation}\label{sequence}H^1((\A^{\times})^2)\rightarrow^{\!\!\!\!\!\!i^*} H^1(C)
\rightarrow H^3(CY)\rightarrow^{\!\!\!\!\!\!\int}
H^2((\A^{\times})^2)\rightarrow 0\end{equation} where
$$\int:\omega=\frac{dxdudv}{xuv}\mapsto 1=\frac{dudv}{uv}$$  and the
Frobenius map $Fr$ on $H^3(CY)$ commutes with the maps $p\cdot Fr$ on the
other components of the diagram (thus for example $\int
Fr(\omega)=p^3$).

We point out that $H^1((\A^{\times})^2)$ is a two-dimensional vector space with basis $\left\{\frac{du}{u},\frac{dv}{v}\right\}$ and $H^2((\A^{\times})^2)$ is one-dimensional and spanned by $\frac{dudv}{uv}$.  The form $\omega$ above is obtained from the standard holomorphic nowhere vanishing top differential form $\frac{dx du dv}{\partial_y(xy+P(u,v))}$ by dividing it by $uv$.  It is immediately clear that $Fr(\omega)=p^3$.

If $P=P_\lambda$, i.e. it depends on a parameter, then the maps are
compatible with the Gauss-Manin connections.  Note that
$\nabla_{GM}$ on $H^i((\A^{\times})^2)$ is easy to describe\footnote{The description of the Gauss-Manin connection on the cohomology of the Calabi-Yau $CY$ can be given very explicitly if one uses the Dwork model for the cohomology.  See  the end of  Sec. \ref{gd-method}.}, namely
the generators $\frac{du}{u}$, $\frac{dv}{v}$ of
$H^1((\A^{\times})^2)$ and $\frac{dudv}{uv}$ of
$H^2((\A^{\times})^2)$ are flat.  In particular we see that
$$\int\nabla_{GM}^{i>0}\omega=\nabla_{GM}^{i>0}\int\omega=\nabla_{GM}^{i>0}1=0$$ and so $\nabla_{GM}^{i>0}\omega$
come from $H^1(C)$.

\subsubsection{Example} Let us consider a particular function $P$ on $(\A^\times)^2$, namely, \begin{equation}\label{exampleeq}P(u,v)=u+v+\lambda u^{-1}v^{-1}+1\end{equation} where $\lambda$ is a small and non-zero parameter\footnote{So that we consider $H^3(CY)$ as a vector bundle on a small punctured disk rather than a single cohomology group.}.  The hypersurface $P(u,v)=0$ is an elliptic curve
with 3 punctures aligned with the axis of $\A^2$. In the exact sequence (\ref{sequence}) above the cocycles produced by the punctures in $H^1(C)$ are annihilated 
by the image of $H^1((\A^{\times})^2)$. Thus the complete smooth elliptic curve of
genus one $\overline{C}$ that is the projective completion of $C$ makes an appearance.  More precisely, we have a refinement of the sequence (\ref{sequence}) below: $$0\rightarrow H^1(\overline{C})
\rightarrow H^3(CY)\rightarrow^{\!\!\!\!\!\!\int}
\cp\rightarrow 0.$$

If $\omega$ is a holomorphic nowhere vanishing three-form on $CY$ as above, then an explicit computation in the Dwork model for the cohomology shows that \begin{equation}\label{omegaderivative}\nabla_{\lambda\partial_\lambda}\omega\in H^{1,0}(\overline{C})\end{equation} where $H^{1}(\overline{C})=H^{1,0}(\overline{C})\oplus H^{0,1}(\overline{C})$ is the usual Hodge decomposition.

Emulating \cite{sv1} (which deals with the case of a compact three-fold) one may choose a framing of $H^1(\overline{C})$, let us call it $\{e,l\}$ satisfying some important conditions.  Namely, $$<e,l>=1,$$ where $<\cdot,\cdot>$ denotes the symplectic pairing on $H^1(\overline{C})$, $$e\in H^{1,0}(\overline{C}),$$ $$\nabla_{\lambda\partial_\lambda}e=g(\lambda)l$$ with $g$ a function of $\lambda$ and finally $$\nabla_{\lambda\partial_\lambda}l=0.$$

By the Equation (\ref{omegaderivative}) we see that $$\nabla_{\lambda\partial_\lambda}\omega=f(\lambda)e$$ where $f$ is another function of $\lambda$.  

The key difference in what remains is that we must replace the symplectic pairing on $H^3$ (used in \cite{sv1}), which is absent in a non-compact case such as ours, by the pairing between $H^3$ and $H^3_c$.  The analysis of the functions $f(\lambda)$ and $g(\lambda)$, using the Frobenius map and its compatibility with the pairing, yields certain integrality results modulo the knowledge of the behavior of the Frobenius map at the boundary point $\lambda=0$.  This extra data was obtained in \cite{vologodsky} using the theory of motives.  In \cite{shapiro} this analysis was performed explicitly for the case of the mirror quintic.

One may try to use the methods presented here to obtain integrality results also in other examples, or perhaps classes of examples.  We point out however that the problem of calculating the Frobenius map at the boundary point is a separate issue that is not at all trivial.  One can attempt to use explicit computations as was done in \cite{shapiro}, or perhaps the theory of motives can not be avoided if we ask for a general enough answer.

\section{Appendix}

\subsection{The Griffiths-Dwork method}\label{gd-method}
We explain the modifications necessary to define the
Frobenius action on the (middle dimensional) cohomology of a hypersurface inside the projective space.  We
assume that the hypersurface $V\subset\proj^n$ (for $n$ even, that is the dimension of $V$ is odd) is cut out by a homogeneous polynomial $f$ of degree
$d$, i.e., a section of the line bundle
$\mathcal{O}_{\mathbb{P}^n}(d)$ over $\mathbb{P}^n$.  This appendix is inspired by the similarly named section of \cite{coxkatz}.  \emph{Note that the Frobenius map obtained via this method is equal to $p^2$ times the usual Frobenius map.}

The idea is that one can naturally identify the middle dimensional cohomology of $V$ (for $V$ odd dimensional) with $H^n(\proj^n-V)$.  Since the space $\proj^n-V$ is affine, the latter cohomology group may be computed as the quotient of the space of top differential forms.  These have a nice homogeneous description which is readily shown to be isomorphic to an expression very similar to the one used to define the Frobenius map for a hypersurface in $\A^n$.  Some details are below.

Consider the short exact sequence of $D$-modules on $\proj^n$: $$0\rightarrow\mathcal{O}_{\proj^n}\rightarrow\mathcal{O}_{\proj^n-V}\rightarrow\delta_{V}\rightarrow 0$$ and note that the de Rham cohomology of $\delta_{V}$ computes the cohomology of $V$ up to a shift.  From a long exact sequence on cohomology (induced by the above) we obtain a map $H^n_{dR}(\mathcal{O}_{\proj^n-V})\rightarrow H^{n}_{dR}(\delta_{V})$ which in other words is $$H^n(\proj^n-V)\rightarrow H^{n-1}(V).$$  For $n$ even the above map is surjective simply because $H^{n+1}(\proj^n)=0$, however it is also true, see \cite{grif}, that it is an isomorphism.

By the above it suffices to compute  $H^n(\proj^n-V)$.  However this is not difficult  as is mentioned previously, namely it is given by the top differential
forms on $\proj^n-V$ modulo some cohomological relations.  

More generally, the de Rham complex of $\proj^n-V$ can be realized as a certain
subcomplex of $DR\,\cp[x_0,...,x_n,f^{-1}]$ (i.e., the de Rham
complex of $\A^{n+1}$ with singularities allowed along $V$).  This subcomplex can be described explicitly as
consisting of forms $\omega$ such that $L_E\omega=0$ and
$\iota_E\omega=0$ where $E$ is the Euler vector field on $\A^{n+1}$,
i.e. $E=\sum x_i\partial_i$. As usual $L$ and $\iota$ denote the Lie derivative and the contraction with a vector field respectively.  Note that since $f$ is homogeneous, so $DR\,\cp[x_0,...,x_n,f^{-1}]$ is graded and the condition that $L_E\omega=0$ is equivalent to the requirement that $\omega$ be of homogeneous degree $0$.

It is easy to see directly that the above conditions do indeed define a subcomplex, i.e., it is preserved by the de Rham differential $d$.  Furthermore, the subcomplex is clearly preserved by $\iota_{x_i\partial_j}$ and thus by  $L_{x_i\partial_j}$ since
$L_\xi=d\circ\iota_\xi+\iota_\xi\circ d$.  This means that for $\omega$ as above, the expression $L_{x_i\partial_j}\omega$ (being $0$ in the cohomology) gives a relation on the differential forms.  In fact these are the only relations that we will need.

Let us return to $H^n(\proj^n-V)$.  Observe that $\Omega=\sum(-1)^i x_i
dx_0..\widehat{dx_i}..dx_n$ gives the unique up to scalars section
of $\Omega^n_{\proj^n}\otimes\mathcal{O}(n+1)$. More precisely, the expression for $\Omega$ is an $n$-form in $DR\,\cp[x_0,...,x_n,f^{-1}]$ of homogeneous degree $n+1$ and one can check that it is annihilated by $\iota_E$.  Thus it is a section of $\Omega^n_{\proj^n}\otimes\mathcal{O}(n+1)\cong\mathcal{O}_{\proj^n}$ and so is unique. We conclude that $H^n(\proj^n-V)$ is a quotient of $$\left\{\dfrac{x^I\Omega}{f^{k+1}}: n+1+|I|=(k+1)d\right\}.$$

Let us derive the relations on the above.  Note that $L_{x_i\partial_j}\Omega=\delta_{ij}\Omega$ and we can use this to derive the equations $$\dfrac{(\partial_j x^I)\Omega}{f^k}=-k\dfrac{x^I(\partial_j f)\Omega}{f^{k+1}}.$$

We need to compare the above with the twisted algebraic de Rham complex $DR\,
\cp[x_0,...,x_n,t]e^{\pi tf}$ or rather its homogeneous degree zero $0$ part\footnote{
We note that $x_i$ have degree $1$, while $t$ has degree $-d$, thus
$\cp[x_0,..,x_n,t]_0$ is a realization of the global functions on
the total space of $\mathcal{O}(-d)$.}.  The reason for the comparison is that the latter is isomorphic to the overconvergent complex $\left(DR\,
\cp^\dagger[x_0,...,x_n,t]e^{\pi tf}\right)_0$ which carries a natural Frobenius action as described below.

It turns out that $$H^n(\proj^n-V)\cong \left(DR^{n+2}\,
\cp[x_0,...,x_n,t]e^{\pi tf}\right)_0.$$  To see this note that the latter is a quotient of $$\left\{x^I t^k dx_0..dx_n dt: n+1+|I|=(k+1)d \right\}$$ by the relations $$[(\partial_t+\pi f)\cdot x^I t^k]dx_0..dx_n dt=0$$ and $$[(\partial_j + \pi t(\partial_j f))\cdot x^I t^k]dx_0..dx_n dt=0.$$  So that $x^I t^k dx_0..dx_n dt$ corresponds up to an appropriate scalar to $\dfrac{x^I\Omega}{f^{k+1}}$. 

For
the cases we are interested in  we
have that $d=n+1$, thus $$dx dt\mapsto
\dfrac{\Omega}{f}$$ and more explicitly, the cohomology group $H^{n-1}(V)$ is identified with the vector space spanned\footnote{The meaning of spanned depends on which version of the de Rham complex we consider.  Ultimately both the algebraic and overconvergent versions give the same answer for $H^{n-1}(V)$, but the overconvergent version allows for an explicit definition of the Frobenius action.} by $x^I t^\alpha$
with $|I|=\alpha d$ modulo the relations $(\partial_i
x^I)t^\alpha=-\pi(\partial_i f)x^I t^{\alpha+1}$ and $x^I
(\partial_t t^\alpha)=-\pi f x^I t^\alpha$.

Recall that the Frobenius action on $DR\, \cp^\dagger[x_0,...,x_n,t]e^{\pi
tf}$, which preserves the subcomplex $\left(DR\, \cp^\dagger[x_0,...,x_n,t]e^{\pi
tf}\right)_0$, is given by $$\omega(x,t)\mapsto e^{\pi(t^p
f(x^p)-tf)}\omega(x^p,t^p).$$  If as in our case, $f$ depends on a
parameter $\psi$ then $\psi$ is raised to the power $p$ by the
Frobenius as well, i.e., $$\omega(\psi,x,t)\mapsto e^{\pi(t^p
f(\psi^p,x^p)-tf)}\omega(\psi^p,x^p,t^p).$$  Furthermore, if $f$ is a sum of monomials
$x^{I_j}$, we can rewrite $e^{\pi(t^p f(x^p)-tf)}$ as the product
$\Pi_j A(tx^{I_j})$ where $e^{\pi(z^p-z)}=A(z)$.

When $f$ depends on a
parameter $\psi$ we get a family of hypersurfaces $V_\psi$ whose cohomology groups possess what is known as the Gauss-Manin connection which can be described explicitly in our setting.  Namely, we have identified $H^{n-1}(V_\psi)$ with a quotient of $\left\{\psi^s x^I t^k dx_0..dx_n dt: n+1+|I|=(k+1)d \right\}$ and the connection is given by $$\nabla_{\partial_\psi}=\partial_\psi+\pi t(\partial_\psi f).$$

One can show that the Frobenius map is compatible with the Gauss-Manin connection in the sense that $$\nabla_{\psi\partial_\psi}\circ Fr=p Fr\circ \nabla_{\psi\partial_\psi}.$$

\subsection{A sketch of the algebraic Dwork-de Rham comparison}
We use the language of $D$-modules and their de Rham cohomology to relate the algebraic (i.e., omitting the analytic aspects of overconvergence) Dwork cohomology of a hypersurface to its de Rham cohomology. We are working up to a shift of the cohomological degree.  The discussion presented here is similar to \cite{ddr}.

Let $f:Y\rightarrow \A^1$ and consider the maps $$\A^1\times X\stackrel{Id\times
f}{\rightarrow} \A^1\times\A^1\stackrel{p_1}{\rightarrow} \A^1$$
$$(\tau,x)\mapsto(\tau,f(x))\mapsto \tau$$  and let $e^{\tau f(y)}$
be a $D$-module on $\A^1\times Y$ as before.  Denote by $p_{\A^1
}$ the composition of the two maps above, then
$p_{\A^1
*}e^{\tau f(y)}\simeq p_{1
*}\circ(Id\times f)_* e^{\tau f(y)}\cong p_{1
*}\circ(Id\times f)_*\circ(Id\times f)^* e^{\tau t}\cong p_{1 *}(e^{\tau t}\otimes(Id\times f)_*
\mathcal{O}_{\A^1\times Y})\cong p_{1 *}(e^{\tau t}\otimes
\mathcal{O}_{\A^1}\boxtimes f_*(\mathcal{O}_Y))\cong
\widehat{f_*(\mathcal{O}_Y)}$ where $\widehat{f_*(\mathcal{O}_Y)}$
denotes the Fourier transform of $f_*(\mathcal{O}_Y)$.

The above can be used to extract another proof of the fact that the
total de Rham cohomology of the $D$-module $e^{\tau f}$ computes (up to
shift) the cohomology of the $0$-set of $f$ (when $f$ is smooth,
otherwise it is a better version of the cohomology of $f=0$).
Namely, the de Rham cohomology of $e^{\tau f}$ is isomorphic to that
of $\widehat{f_*(\mathcal{O}_Y)}$, and the cohomology of
$\widehat{f_*(\mathcal{O}_Y)}$ is equal to $i^!_0 f_*(\mathcal{O}_X)$ (by definition of the de Rham cohomology and the Fourier transform) where $i_0$ comes from the diagram below.  $$\xymatrix{f^{-1}(0)\ar[d]^\pi\ar[r]^i & Y\ar[d]^f\\
0\ar[r]^{i_0}& \A^1}$$  By base change we can replace $i^!_0 f_*(\mathcal{O}_X)$ by $\pi_* i^!\mathcal{O}_Y$ which is (again up to shift) the cohomology of the zero set of $f$.

\medskip
\noindent{\bf Acknowledgments.} We are indebted to A. Schwarz for posing the initial question as well as much appreciated discussions, comments and advice.  Thanks are also due to E. Macri for useful remarks.  We appreciate the hospitality of IH\'{E}S, MPIM Bonn and
University of Waterloo where parts of this paper were written.

\medskip
\noindent Institut des Hautes \'{E}tudes Scientifiques, Bures-sur-Yvette, France
\newline \emph{E-mail address}:
\textbf{shapiro@ihes.fr}

\end{document}